\newtheorem{theorem}{Theorem}[section]
\newtheorem*{theorem*}{Theorem}
\newtheorem{proposition}[theorem]{Proposition}
\newtheorem*{proposition*}{Proposition}
\newtheorem{lemma}[theorem]{Lemma}
\newtheorem*{lemma*}{Lemma}
\newtheorem{corollary}[theorem]{Corollary}
\theoremstyle{definition}
\newtheorem{definition}[theorem]{Definition}
\newtheorem{remark}[theorem]{Remark}
\numberwithin{equation}{section}
\renewcommand{\theenumi}{\textit{(\roman{enumi})}}
\newcommand{\half}{\frac{1}{2}}
\newcommand{\NN}{\mathbb{N}}
\newcommand{\RR}{\mathbb{R}}
\newcommand{\Support}{\text{supp}}
\newcommand{\V}{V}
\newcommand{\VX}{V(X)}
\newcommand{\Vs}{V(\sigma)}
\newcommand{\VY}{V(Y)}
\newcommand{\Graph}{\mathcal{G}}
\newcommand{\GP}[3]{( #1 \vert #2 )_{#3}}
\newcommand{\DD}[4]{( #1 , #2 \, \vert \, #3 , #4 )}
\newcommand{\dhat}{\hat{d}}        
\newcommand{\DDhat}[4]{\langle #1 , #2 \, \vert \, #3 , #4 \rangle}
\newcommand{\Dhat}{\widehat{D}}    
\newcommand{\ds}{d_{\sigma}}       
\newcommand{\dsi}{d_{\sigma_i}}    
\newcommand{\dt}{d_{\tau}}         
\newcommand{\dD}{d_{\Delta}}       
\newcommand{\dX}{d_X}              
\newcommand{\dG}{d_{\Graph}}       
\newcommand{\tildedG}{\tilde{d}_{\Graph}}   
\newcommand{\dext}{\tilde{d}}
\newcommand{\GPext}[3]{\langle #1 \wr #2 \rangle_{#3}}
\newcommand{\DDext}[4]{\langle #1 , #2 \wr #3 , #4 \rangle}
\newcommand{\sxb}{\ensuremath{
{\raise0.2ex\hbox{$\circ\mspace{-9mu}*$}
\mspace{-1.5mu}\overline{X}}}}
\newcommand{\SymmJoinX}{\sxb}
\newcommand{\SymmJoinMetric}{d_{\ast}}
\newcommand{\length}{\text{length}}
\newcommand{\oneskeleton}{$1$\nobreakdash-\hspace{0pt}skeleton}
\newcommand{\lone}{$l^1$}  
\newcommand{\lonemetric}{\lone\nobreakdash-metric}
\newcommand{\lonepathmetric}{\lone\nobreakdash-path metric}
\DeclareMathOperator{\Isom}{Isom}
\newcommand{\IsomXinvt}{$\Isom(X)$\nobreakdash-\hspace{0pt}invariant}
\title{Extending A Metric On A Simplicial Complex}
\author{Adam Mole\thanks{Supported by SFB 878 - Groups, Geometry and Actions.}}
\begin{document}

\maketitle

\begin{abstract}
 We show how to extend a metric defined on the vertex set of a simplicial complex to the whole simplicial complex, thus correcting a mistake in Mineyev's construction of a flow space.
\end{abstract}


\section{Introduction}\label{Section:Introduction}

In \cite{Mineyev2005FaJiMS}, Mineyev constructed a flow space over  what he called a hyperbolic complex, which is a uniformly locally finite simplicial complex~$X$ whose \oneskeleton~$\Graph$ is a Gromov hyperbolic graph where the metric~$\dG$ is taken to be the length metric induced by giving every edge length~$1$.

This construction involved changing the metric on the vertex set, using a metric~$\dhat$ constructed in \cite{MineyevYu2002TBCCfHG}, and then extending this metric from the vertex set to the whole simplicial complex.

Mineyev used a bilinearity formula to define this extension (see \cite[Equation~31]{Mineyev2005FaJiMS}).
However, this bilinear extension does not define a metric, since it may happen that~$\dext(x,x) > 0$.

In this paper we start more abstractly by showing how to extend a given metric~$\dhat$ that is defined on the vertex set of an arbitrary simplicial complex~$X$ to the whole simplicial complex, under the assumption that~$\dhat$ is linearly bounded by the canonical metric~$\dG$ on the \oneskeleton~$\Graph$ of~$X$.

Then we show that the corresponding extension of Mineyev's metric~$\dhat$ still satisfies the desired properties, such as the double difference extending continuously to the boundary.

\subsection{The Farrell-Jones Conjecture for hyperbolic groups}

As an application of this we look at the Farrell-Jones Conjecture for hyperbolic groups.
The $K$-theoretic Farrell-Jones Conjecture was proven for hyperbolic groups by Bartels-L{\"u}ck-Reich in \cite{BartelsLueckReich2008KFJC}.
The proof involves constructing certain covers of~$G \times \overline{X}$, where~$G$ is a given hyperbolic group and~$\overline{X} = X \cup \partial X$ is the (Gromov) compactification of the Rips complex of~$G$.

The Rips complex is a hyperbolic complex (in the sense of Mineyev in~\cite{Mineyev2005FaJiMS}) and the construction of the covers of~$G \times \overline{X}$ in \cite{BartelsLueckReich2008ECHG} uses the flow space constructed by Mineyev in \cite{Mineyev2005FaJiMS} (where it is called the symmetric join of~$\overline{X}$).
Therefore the mistake with the bilinear extension not defining a metric creates a gap in the proof of the $K$-theoretic Farrell-Jones Conjecture for hyperbolic groups.
In fact, the bilinearity extension formula was explicitly repeated by Bartels-L{\"u}ck-Reich in \cite[Subsection~6.1]{BartelsLueckReich2008ECHG}.

By fixing Mineyev's mistake we will also repair this gap in the proof of the $K$-theoretic Farrell-Jones Conjecture for hyperbolic groups.

\section{Extending a Metric}\label{Section:ExtendingAMetric}

In this section we explain how to extend a metric that is defined on the vertex set of a simplicial complex to the whole simplicial complex.

\subsection{The bilinear extension}\label{Subsection:NaiveApproach}


In any simplicial complex~$X$ with vertex set~$\V := \VX$ every point can be uniquely expressed in barycentric coordinates, namely given~$x \in X$ there are numbers~$x_u \in [0,1]$ for~$u \in \V$ such that $\sum_{u \in \V} x_u = 1$ and $x = \sum_{u \in \V} x_u \, u$.
We call~$(x_u)_{u \in \V}$ the \emph{barycentric coordinates} of~$x$ and we define the \emph{support} of~$x$ to be the set $\Support(x) := \lbrace u \in \V \, \vert \, x_u \neq 0 \rbrace$.

If~$\dhat$ is a metric defined on~$\VX$ then we can extend this to a function on all of~$X$ by taking the bilinear extension of~$\dhat$ using barycentric coordinates.
Thus we define the function~$\Dhat \colon X \times X \rightarrow \RR$ by
\begin{equation}\label{Eqn:Dhat}
 \Dhat(x,y) = \sum_{u,v \in \V} x_u y_v \, \dhat(u,v)\index{$\Dhat$}
\end{equation}
where~$(x_u)_{u \in \V}$ and~$(y_v)_{v \in \V}$ are the barycentric coordinates of the points~$x$ and~$y$ respectively.

This function~$\Dhat$ fails to be a metric because there are points~$x \in X$ for which~$\Dhat(x,x) > 0$.
In fact, for any point~$x$ that is not a vertex we can find two distinct vertices~$u_1$ and~$u_2$ such that both~$x_{u_1} \neq 0$ and~$x_{u_2} \neq 0$.
Then
\begin{align*}
 \Dhat(x,x)
   &= \sum_{u,u' \in \V} x_u x_{u'} \, \dhat(u,u')
   \\
   &\geq x_{u_1} x_{u_2} \, \dhat(u_1,u_2)
   \\
   &> 0.
\end{align*}
So this na\"{i}ve approach does not give a metric.
However, the function~$\Dhat$ is symmetric, non-negative, and it satisfies the triangle inequality;
\begin{lemma}\label{Lemma:BilinearExtTriangleInequality}
 Let~$X$ be a simplicial complex and let~$\dhat$ be a metric on the vertex set of~$X$.
 Let~$\Dhat$ be the bilinear extension of~$\dhat$ using barycentric coordinates, as in equation~\eqref{Eqn:Dhat}.
 Then for all~$x,y,z \in X$,
 \[
  \Dhat(x,z) \leq \Dhat(x,y) + \Dhat(y,z).
 \]
\end{lemma}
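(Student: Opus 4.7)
The plan is to reduce the triangle inequality for $\Dhat$ to the pointwise triangle inequality for $\dhat$ on vertices, by exploiting the fact that barycentric coordinates sum to $1$.

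First I would insert a ``trivial'' factor of $\sum_{v \in \V} y_v = 1$ into the definition of $\Dhat(x,z)$, so that the sum ranges over triples of vertices indexed by $x$, $y$ and $z$ simultaneously:
\[
 \Dhat(x,z) = \sum_{u,w \in \V} x_u z_w \, \dhat(u,w) = \sum_{u,v,w \in \V} x_u y_v z_w \, \dhat(u,w).
\]
Since every coefficient $x_u y_v z_w$ is non-negative, I can then apply the triangle inequality $\dhat(u,w) \leq \dhat(u,v) + \dhat(v,w)$ termwise in the triple sum.

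Next I would split the resulting sum into two pieces and use $\sum_w z_w = 1$ and $\sum_u x_u = 1$ to collapse the irrelevant index in each piece. The first piece becomes $\sum_{u,v} x_u y_v \, \dhat(u,v) \cdot \sum_w z_w = \Dhat(x,y)$, and the second piece becomes $\sum_{v,w} y_v z_w \, \dhat(v,w) \cdot \sum_u x_u = \Dhat(y,z)$. Adding these gives the desired inequality.

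There is no real obstacle here: the argument is the standard one showing that any bilinear extension of a pseudometric over a probability simplex satisfies the triangle inequality, and non-negativity of the barycentric weights is exactly what is needed to apply the vertex-level triangle inequality termwise. The only technical point to be a little careful about is that the sums are a priori infinite (one for each vertex of $X$), but since each point of $X$ has finite support in its barycentric coordinates, all sums appearing are effectively finite and the manipulations above are unambiguous.
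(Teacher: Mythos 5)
Your proposal is correct and follows essentially the same route as the paper's proof: insert the factor $\sum_{v} y_v = 1$ to pass to a triple sum, apply the triangle inequality for $\dhat$ termwise using non-negativity of the barycentric weights, and collapse the redundant index in each resulting piece. Your remark about the sums being effectively finite because each point has finite support is a sensible (if implicit in the paper) observation.
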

\begin{proof}
 Write~$x,y,z$ in barycentric coordinates, and recall that the sum of the barycentric coordinates associated to a point is always~$1$.
 Then we use the triangle inequality of~$\dhat$ and calculate;
 \begin{align*}
  \Dhat(x,z)
    &= \sum_{u,w \in \V} x_u z_w \, \dhat(u,w)
      \\
    &= \sum_{u,v,w \in \V} x_u y_v z_w \, \dhat(u,w)
      \\
    &\leq \sum_{u,w \in \V} x_u y_v z_w 
                \big( \dhat(u,v) + \dhat(v,w) \big)
      \\
    &= \sum_{u,v,w \in \V} x_u y_v z_w \, \dhat(u,v)
       + \sum_{u,v,w \in \V} x_u y_v z_w \, \dhat(v,w)
      \\
    &= \sum_{u,v \in \V} x_u y_v \, \dhat(u,v)
       + \sum_{v,w \in \V} y_v z_w \, \dhat(v,w)
      \\
    &= \Dhat(x,y) + \Dhat(y,z).
    \qedhere
 \end{align*}
\end{proof}

Therefore the function~$\Dhat$ only fails to be a metric locally.
We try to correct this by introducing a new metric~$\dX$ on~$X$ and defining the extension of~$\dhat$ to be the minimum of~$\Dhat$ and~$\dX$.

We have to be careful how we construct the metric~$\dX$ so that it is only smaller than~$\Dhat$ locally, and in particular it is smaller than~$\dhat$ on vertices.

\subsection{The $l^1$-path metric}\label{Subsection:lonePathMetric}

We want to construct a metric~$\dX$ on a simplicial complex~$X$ to help fix the problem encountered with the bilinear extension in Subsection~\ref{Subsection:NaiveApproach}.

There is a canonical metric on a simplex~$\sigma$ given by the \lonemetric\ applied to the barycentric coordinates;
for all points~$x,y \in \sigma$ set
\begin{equation}\label{Eqn:l1Metric}
 \ds(x,y) := \half \sum_{u \in \Vs} \lvert x_u - y_u \rvert
\end{equation}
where the factor~$\half$ is to ensure that the distance between vertices is~$1$.
Note that if~$\tau$ is a subsimplex of~$\sigma$ then~$\dt \equiv \ds \vert_{\tau}$ so given two points in a common simplex it does not matter which common simplex we use for the \lonemetric.

If~$X$ is a simplicial complex we can piece together these metrics by defining paths and taking a length metric as follows.

\begin{definition}\label{Defn:l1Metric}
 Let~$X$ be a simplicial complex.
 For any two points~$x,y \in X$ define a \emph{path in~$X$ from~$x$ to~$y$} to be a sequence of points~${x=a_0,a_1, \ldots, a_r=y}$ in~$X$ such that for every~$i = 1,\ldots,r$ there is a simplex~$\sigma_i$ of~$X$ that contains both~$a_{i-1}$ and~$a_i$.
 The \emph{length} of such a path is the sum~$\sum_{i=1}^r \dsi(a_{i-1},a_i)$.
 
 The length of a path does not depend on how we choose the simplices~$\sigma_i$ because the \lonemetric\ of a simplex restricts to the \lonemetric\ on subsimplices.
 
 Then we define a metric on~$X$ by setting~$\dX(x,y)$\index{$\dX$} to be the infimum of lengths of all such paths from~$x$ to~$y$.
 We call this metric the \emph{\lonepathmetric\ on~$X$}.
\end{definition}

\begin{lemma}\label{Lemma:l1PathMetricRestrictions}
 Let~$X$ be a simplicial complex.
 \begin{enumerate}
  \item\label{Item:dloneRestriction} For any simplex~$\sigma$ in~$X$, the restriction of~$\dX$ to~$\sigma$ coincides with~$\ds$.
  \item\label{Item:dloneDiameterOfSimplex} The diameter of any simplex is~$1$, unless the simplex is a vertex.
  \item\label{Item:dloneDisjointSupports} If~$x,y \in X$ have disjoint supports then~$\dX(x,y) \geq 1$.
 \end{enumerate}
\end{lemma}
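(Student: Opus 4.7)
The plan is to view all barycentric coordinates as vectors indexed by the full vertex set $\V$, extending by zero outside the relevant simplex. Under this viewpoint, whenever $a, b$ both lie in a simplex $\sigma_i$, the distance $\dsi(a,b)$ can be written as $\half \sum_{u \in \V} |a_u - b_u|$, since coordinates outside $V(\sigma_i)$ vanish for both points. The length of a path $x = a_0, a_1, \ldots, a_r = y$ then rearranges as $\half \sum_{u \in \V} \sum_{i=1}^r |(a_{i-1})_u - (a_i)_u|$, and a coordinate-wise application of the triangle inequality for real numbers (telescoping the inner sum for each fixed $u$) yields the uniform lower bound $\half \sum_{u \in \V} |x_u - y_u|$ for the length of any path from $x$ to $y$ in $X$. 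All three items will follow from this single inequality.

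For \textit{(i)}, the inequality $\dX(x,y) \leq \ds(x,y)$ is immediate from the one-step path $x, y$ with common simplex $\sigma$. For the reverse, apply the lower bound above: when $x, y \in \sigma$, the coordinates outside $\Vs$ vanish for both points, so the bound reduces to exactly $\ds(x,y)$.

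Items \textit{(ii)} and \textit{(iii)} then fall out by evaluating $\half \sum_u |x_u - y_u|$ in the two special situations. For \textit{(ii)}, item \textit{(i)} reduces the diameter of $\sigma$ to a computation inside $\sigma$; splitting $\Vs$ according to the sign of $x_u - y_u$ and using $\sum_u x_u = \sum_u y_u = 1$ gives $\ds(x,y) \leq 1$, with equality attained by any two distinct vertices, while a zero-dimensional simplex has diameter $0$. For \textit{(iii)}, disjoint supports force $|x_u - y_u| = x_u + y_u$ at every vertex, so the lower bound evaluates to $\half(1 + 1) = 1$.

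The main obstacle is the reverse inequality in item \textit{(i)}: one has to recognize that a path routed through arbitrary intermediate simplices cannot shortcut the straight $l^1$-distance already available within $\sigma$. The key insight that makes this work is that the coordinate-wise telescoping argument is insensitive to the choice of simplices $\sigma_i$ along the path, precisely because every barycentric coordinate vector has been extended to the whole of $\V$ by zero.
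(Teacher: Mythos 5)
Your proof is correct and takes essentially the same route as the paper's: the paper formalizes your extend-coordinates-by-zero device by embedding the subcomplex spanned by $\sigma$ and the $\sigma_i$ into a large standard simplex $\Delta$ and invoking the triangle inequality of $\dD$, which is precisely your coordinate-wise telescoping. Your single unified lower bound $\half\sum_{u}|x_u-y_u|$ also subsumes the paper's separate telescoping argument for part \textit{(iii)}, so the packaging is marginally cleaner but the idea is identical.
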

\begin{proof}
 \ref{Item:dloneRestriction}
 Suppose~$x,y \in X$ lie in a common simplex~$\sigma$.
 By considering the trivial path~$x=a_0,a_1=y$ we get~$\dX(x,y) \leq \ds(x,y)$ so we only need to show the inequality in the other direction.
 We need to show that the length of any path in~$X$ from~$x$ to~$y$ is at least~$\ds(x,y)$.

 Consider a path~$a_0,a_1,\ldots,a_r$ from~$x$ to~$y$ in~$X$.
 Since it is a path we can find simplices~$\sigma_i$ for~$i=1,\ldots,r$ such that~$\sigma_i$ contains both~$a_{i-1}$ and~$a_i$.
 Let~$Y$ be the subcomplex of~$X$ consisting of~$\sigma$ and all the~$\sigma_i$.
 Denote the vertex set of~$Y$ by~$\VY$.
 Let~$\Delta=\Delta^{\lvert\VY\rvert-1}$ be the standard simplex of dimension~$\lvert\VY\rvert - 1$ and identify the vertex set of~$\Delta$ with~$\VY$.

 There is a canonical inclusion~$Y \hookrightarrow \Delta$ and so we can think of~$Y$ as a subcomplex of~$\Delta$.
 Then for each~$i$ we have~$\dsi(a_{i-1},a_i) = \dD(a_{i-1},a_i)$ and the triangle inequality of~$\dD$ tells us that the length of the path~$a_0,\ldots,a_r$ is at least~$\dD(a_0,a_r) = \ds(x,y)$.
 
 We can do this for any path from~$x$ to~$y$ so~$\dX(x,y) \geq \ds(x,y)$.
 Therefore we can conclude that~$\dX$ restricts to~$\ds$ on the simplex~$\sigma$.
 
 \ref{Item:dloneDiameterOfSimplex}
 It follows from part~\ref{Item:dloneRestriction} and equation~\eqref{Eqn:l1Metric} that if two points~$x,y$ lie in a common simplex~$\sigma$ of~$X$ then~$\dX(x,y) = \ds(x,y) \leq 1$.
 Taking~$x,y$ to be distinct vertices of~$\sigma$ shows that the bound is strict.
 
 \ref{Item:dloneDisjointSupports}
 If~$u$ is a vertex in the support of~$x$ but not in the support of~$y$ then any path from~$x$ to~$y$ must eventually take the $u$-coordinate down from~$x_u$ to zero.
 Formally, if~$a_0,\ldots,a_r$ is a path from~$x$ to~$y$ let~$a_{i,u}$ denote the $u$\nobreakdash-\hspace{0pt}coordinate of~$a_i$.
 Then
 \begin{align*}
  \sum_{i=1}^r \half \left\lvert a_{i-1,u} - a_{i,u} \right\rvert
  &\geq
  \half \left\lvert \sum_{i=1}^r \left( a_{i-1,u} - a_{i,u} \right) \right\rvert
  \\
  &=
  \half \left\lvert a_{0,u} - a_{r,u} \right\vert
  = \half x_u
 \end{align*}
 and thus the $u$-coordinate contributes at least~$\half x_u$ to~$\dX(x,y)$.
 
 Similarly if~$v$ is in the support of~$y$ but not the support of~$x$ then any path must take the $v$-coordinate from zero up to~$y_v$, hence the $v$-coordinate contributes at least~$\half y_v$ to~$\dX(x,y)$.
 
 Therefore if the support of~$x$ is disjoint from the support of~$y$ then we get at least~$\half$ from the coordinates in the support of~$x$ and at least~$\half$ from the coordinates in the support of~$y$.
 Thus~$\dX(x,y) \geq 1$.
\end{proof}

We want to use the metric~$\dX$ to define an extension of a metric~$\dhat$ that is given on the vertex set of~$X$.
Hence we need to know how the metric~$\dX$ behaves on~$\VX$.

Before that we need some new notation:
Let~$\Graph$ be a graph (for our purposes all graphs are simple, i.e. every edge has two distinct end-points and there is at most one edge between any pair of vertices).
The \emph{word metric} on~$\Graph$ is the length metric~$\dG$\index{$\dG$} obtained by giving every edge length~$1$.

Note that a graph can be considered as a $1$-dimensional simplicial complex and so has an \lonepathmetric.
However this metric coincides with the word metric on~$\Graph$ so the notation~$\dG$ is unambiguous.

Now we prove that the \lonepathmetric\ on the vertices of a simplicial complex coincides with the word metric of the $1$-skeleton, i.e. that adding higher dimensional simplices to a graph does not change the \lonepathmetric\ on vertices.

\begin{lemma}\label{Lemma:distanceloneRestrictionToV}
 Let~$X$ be a simplicial complex, and let~$\Graph$ be the $1$-skeleton of~$X$.
 For any~$u,v \in \V := \VX$ we have
 \[
  \dX(u,v) = \dG(u,v).
 \]
\end{lemma}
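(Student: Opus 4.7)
The plan is to prove the two inequalities $\dX(u,v) \leq \dG(u,v)$ and $\dX(u,v) \geq \dG(u,v)$ separately. The first is immediate: any edge geodesic in $\Graph$ realising $\dG(u,v)$ is automatically a path in $X$ in the sense of Definition~\ref{Defn:l1Metric}, since each edge is a $1$-simplex of $X$ containing its two endpoints, and each edge contributes $l^1$-length exactly $1$. Hence this path has length $\dG(u,v)$ and therefore $\dX(u,v) \leq \dG(u,v)$.

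For the reverse inequality I would exhibit an explicit $1$-Lipschitz function $\tilde f \colon (X,\dX) \to \RR$ separating $u$ from $v$ by exactly $\dG(u,v)$. The natural candidate is the barycentric interpolation of the graph distance from $u$,
\[
 \tilde f(x) := \sum_{w \in \V} x_w \, \dG(u,w),
\]
which satisfies $\tilde f(u) = 0$ and $\tilde f(v) = \dG(u,v)$. Once $\tilde f$ is shown to be $1$-Lipschitz for $\dX$, the bound $\dG(u,v) = \lvert \tilde f(v) - \tilde f(u) \rvert \leq \dX(u,v)$ follows at once.

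Because $\dX$ is a length metric built from the simplex-wise metrics $\ds$, it is enough to verify the local estimate $\lvert \tilde f(x) - \tilde f(y) \rvert \leq \ds(x,y)$ whenever $x$ and $y$ lie in a common simplex $\sigma$: summing this estimate along any path in $X$ (in the sense of Definition~\ref{Defn:l1Metric}) and then taking the infimum yields the global Lipschitz bound. The key geometric input is that any two vertices of $\sigma$ are either equal or joined by an edge of $\Graph$, so the function $w \mapsto \dG(u,w)$ has oscillation at most $1$ on $\Vs$.

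The main obstacle is extracting the correct constant $1$, rather than the na\"ive $2$, from this oscillation bound. Writing $\tilde f(x) - \tilde f(y) = \sum_{w \in \Vs}(x_w - y_w)\,\dG(u,w)$ and using $\sum_w (x_w - y_w) = 0$ to subtract an arbitrary constant from the coefficients $\dG(u,w)$, one splits the sum according to the sign of $x_w - y_w$. Each of the resulting positive and negative parts has total mass exactly $\ds(x,y)$, so the expression becomes $\ds(x,y)$ times the difference of two probability averages of $\dG(u,\cdot)$ over $\Vs$. This difference is bounded by the oscillation of $\dG(u,\cdot)$ on $\Vs$, that is, by $1$, giving exactly $\lvert \tilde f(x) - \tilde f(y) \rvert \leq \ds(x,y)$ and hence the lemma.
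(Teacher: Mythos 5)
Your proof is correct, but the lower bound $\dX(u,v) \geq \dG(u,v)$ is obtained by a genuinely different route from the paper's. The paper fixes an arbitrary path $a_0,\ldots,a_r$ from $u$ to $v$ and decomposes its $l^1$-length according to the spheres $S_k = \lbrace z \in \V \mid \dG(u,z)=k \rbrace$: it shows that the weight $w_k(a_i)$ carried in each intermediate sphere must rise from $0$ to $1$ and return to $0$, which rests on a combinatorial claim that at some step $i_k$ the point $a_{i_k}$ has \emph{all} of its barycentric mass on $S_k$; each intermediate sphere then contributes total variation $2$, giving length at least $d$. You instead exhibit the single $1$-Lipschitz function $\tilde f(x) = \sum_{w} x_w\,\dG(u,w)$ and deduce the bound from $\lvert \tilde f(v)-\tilde f(u)\rvert \leq \dX(u,v)$. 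Your local estimate is right: writing $\tilde f(x)-\tilde f(y)$ as $\ds(x,y)$ times a difference of two probability averages of $\dG(u,\cdot)$ over subsets of $\Vs$, and using that any two vertices of a simplex are adjacent so the oscillation of $\dG(u,\cdot)$ on $\Vs$ is at most $1$, gives exactly $\lvert\tilde f(x)-\tilde f(y)\rvert \leq \ds(x,y)$; summing along a path and taking the infimum is then routine. Note that this is sharper than what Lemma~\ref{Lemma:DistMinAlmostTriangleInequality} would give for $\Dhat(u,\cdot)$ (which only yields Lipschitz constant $2$), precisely because you exploit $\sum_w (x_w-y_w)=0$ to trade the bound $2\max$ for the oscillation. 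Your argument is shorter and avoids the paper's combinatorial claim about the indices $i_k$; the paper's sphere-by-sphere bookkeeping yields somewhat finer information about how a path must traverse each sphere, but that extra information is not needed for the lemma.
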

\begin{proof}
 If $u = v$ then $\dX(u,v) = 0 = \dG(u,v)$ so we may assume that~$u \neq v$.

 Any path in the $1$-skeleton is also a path in~$X$, so~$\dG(u,v) \geq \dX(u,v)$.
It remains to prove that the length of any path in~$X$ from~$u$ to~$v$ is at least~$\dG(u,v)$.

 For~$k = 0, \ldots, d := \dG(u,v)$ let~$S_k = \lbrace z \in \V \, \vert \, \dG(u,z) = k \rbrace$.
 This is the sphere of radius~$k$ and centre~$u$ in~$(\Graph,\dG)$.
 The idea is that the support of any path from~$u$ to~$v$ must meet every~$S_k$ and moreover the weight must come from~$u$ and through each~$S_k$ before arriving at~$v$ (the path could be longer but we are only looking for a lower bound).
 
 Let~$u = a_0 , a_1, \ldots , a_r = v$ be a path in~$X$ from~$u$ to~$v$.
 So for any~$i$ there is always a simplex~$\sigma_i$ that contains both~$a_{i-1}$ and~$a_i$.
 Write each~$a_i = \sum_{z \in \V} a_i^z z$ in barycentric coordinates and for all~$k$ set
 \[
  w_k(a_i) = \sum_{z \in S_k} a_i^z \in [0,1]
 \]
 which is the \emph{weight} of the point~$a_i$ in the sphere~$S_k$.
 
 The length of this path~$\underline{a}$ is
 \begin{align*}
  \length\big(\underline{a}\big)
  &=
  \sum_{i=1}^r
  \dsi(a_{i-1},a_i)
       \\
  &=
  \sum_{i=1}^r
  \sum_{z \in \VX}
  \half
  \left\lvert a_{i-1}^z - a_i^z \right\rvert
       \\
  &\geq
  \half
  \sum_{i=1}^r
  \sum_{k = 0}^{d}
  \sum_{z \in S_k}
  \left\lvert a_{i-1}^z - a_i^z \right\rvert
       \\
  &\geq
  \half
  \sum_{i=1}^r
  \sum_{k = 0}^{d}
  \left\lvert \sum_{z \in S_k} a_{i-1}^z - \sum_{z \in S_k} a_i^z \right\rvert
       \\
  &=
  \half
  \sum_{k = 0}^{d}
  \sum_{i=1}^r
  \left\lvert w_k (a_{i-1}) - w_k (a_i) \right\rvert.
 \end{align*}
 Now we try to find a lower bound for the sums~$\sum_{i=1}^r \left\lvert w_k (a_{i-1}) - w_k (a_i) \right\rvert$.

 If~$k=0$ then
 \begin{align*}
  \sum_{i=1}^r
  \left\lvert w_0 (a_{i-1}) - w_0 (a_i) \right\rvert
  &\geq
  \left\lvert
   \sum_{i=1}^r \big( w_0 (a_{i-1}) - w_0 (a_i) \big)
  \right\rvert
       \\
  &=
  \left\lvert
   w_0 ( a_0 ) - w_0 (a_r)
  \right\rvert
       \\
  &=
  1
 \end{align*}
 since~$a_0 = u \in S_0$ and~$a_r = v \in S_d$.
 Similarly for~$k = d$ we get
 \begin{align*}
  \sum_{i=1}^r
  \left\lvert w_d (a_{i-1}) - w_d (a_i) \right\rvert
  \geq
  \left\lvert w_d (a_0) - w_d (a_r) \right\rvert
  =
  1.
 \end{align*}

 So we are left with~$1 \leq k < d$.
 We claim that for such a~$k$ we can find some~$i_k \in \lbrace 1,\ldots,d-1 \rbrace$ such that~$w_k(a_{i_k}) = 1$.
 
 If for now we assume the claim is true, then
 \begin{IEEEeqnarray*}{rCl}
  \IEEEeqnarraymulticol{3}{l}{
   \sum_{i=1}^r
   \left\lvert
    w_k (a_{i-1}) - w_k (a_i)
   \right\rvert
  }
        \\
  \qquad \quad &=&
  \sum_{i=1}^{i_k}
  \left\lvert
   w_k (a_{i-1}) - w_k (a_i)
  \right\rvert
  + \sum_{i=i_k+1}^r
  \left\lvert
   w_k (a_{i-1}) - w_k (a_i)
  \right\rvert
        \\
  &\geq&
  \left\lvert
   \sum_{i=1}^{i_k}
   \big( w_k (a_{i-1}) - w_k (a_i) \big)
  \right\rvert
  + \left\lvert
     \sum_{i=i_k+1}^{i_k} \big( w_k (a_{i-1}) - w_k (a_i) \big)
  \right\rvert
        \\
  &=&
  \left\lvert
   w_k (a_0) - w_k (a_{i_k})
  \right\rvert
  +
  \left\lvert
   w_k (a_{i_k}) - w_k (a_r)
  \right\rvert
        \\
  &=& 2
 \end{IEEEeqnarray*}
 since $w_k(a_0) = 0 = w_k(a_r)$ for $1 \leq k < d$, and we would get
 \begin{align*}
  \length\big(\underline{a}\big)
  &\geq
  \half
  \sum_{k = 0}^{d}
  \sum_{i=1}^r
  \left\lvert w_k (a_{i-1}) - w_k (a_i) \right\rvert
       \\  
  &\geq
  \half
  \big(
   1 + 2(d-2) + 1
  \big)
       \\
  &= d = \dG(u,v).
 \end{align*}
 
 Therefore, to finish the proof of the lemma we need to prove the claim that for all~$k=1,\ldots,d-1$ there is some~$i_k$ with~$w_k (a_{i_k}) = 1$.
 
 For all~$i$ there is a simplex containing both~$a_i$ and~$a_{i-1}$ hence every vertex in the support of~$a_i$ is joined to any vertex in the support of~$a_{i-1}$ by an edge.
 In particular, if $w_{k-1}(a_{i-1}) \neq 0$ then $w_{k+1}(a_i) = 0$ and similarly if $w_{k+1}(a_i) \neq 0$ then $w_{k-1}(a_{i-1}) = 0$ because we may move at most one further away from~$u$ with successive~$a_i$'s.
 So if we pick~$i_k$ minimal with $w_{k+1}(a_{i_k+1}) \neq 0$ then $w_{k-1}(a_{i_k}) = 0$ and $w_{k+1}(a_{i_k}) = 0$.
 So all the weight of~$a_{i_k}$ has to be at the sphere~$S_k$, but $\sum_k w_k (a_{i_k}) = 1$.
 Therefore $w_k (a_{i_k}) = 1$.
 
 This proves the claim, and with it also finishes the proof of the lemma by the earlier argument.
\end{proof}

This shows us how the \lonepathmetric\ behaves on the vertex set of a simplicial complex.
We are interested in extending a metric~$\dhat$ which is defined on the vertex set of a simplicial complex to the whole simplicial complex and we hope to do this by taking the minimum of the bilinear extension~$\Dhat$ and the \lonepathmetric~$\dX$.

It follows from Lemma~\ref{Lemma:distanceloneRestrictionToV} that we need~$\dhat(u,v) \leq \dG(u,v)$ for all vertices~$u,v \in \VX$ in order for the minimum to be~$\dhat$ on vertices.
This will not hold in general and so we get a condition on the metric~$\dhat$, namely we ask that there is a constant~$C > 0$ such that for all vertices~$u,v \in \VX$ we have~$\dhat(u,v) \leq C \, \dG(u,v)$.

Then instead of taking the minimum of~$\Dhat$ and~$\dX$ we re-scale the \lonepathmetric\ by~$C$ and take the minimum of~$\Dhat$ and~$C \, \dX$.

\subsection{Extension of a metric}\label{Subsection:Extension}

Suppose we are given a metric~$\dhat$ that is defined on the vertex set of a simplicial complex~$X$.
We wish to extend this to a metric on all of~$X$, and the idea is to take a minimum of the bilinear extension~$\Dhat$ of~$\dhat$ and a re-scaled \lonepathmetric.
Although the triangle inequality holds for both~$\Dhat$ (by Lemma~\ref{Lemma:BilinearExtTriangleInequality}) and the \lonepathmetric, for the triangle inequality to also hold on the minimum we need to look at what happens in the mixed case when the minimum is~$\Dhat$ on one pair and the \lonepathmetric\ on the other.

\begin{lemma}\label{Lemma:DistMinAlmostTriangleInequality}
 Let~$X$ be a simplicial complex and let~$\dhat$ be a metric on the vertex set~$\V = \VX$.
 Suppose there exists a constant~$C>0$ such that for all vertices $u,v \in \V$ we have~$\dhat(u,v) \leq C \, \dG(u,v)$, where~$\dG$ is the word metric on the $1$-skeleton~$\Graph$ of~$X$.
 Then for all~$x,y,z \in X$,
 \begin{equation}\label{Eqn:MinTriangleInequalityMixedCase}
  \Dhat(x,z) \leq \Dhat(x,y) + 2C \, \dX(y,z).
 \end{equation}
\end{lemma}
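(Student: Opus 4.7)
The cleanest route I see is to treat $\Dhat(x,\cdot)$ as a function that is affine in the second coordinate on each simplex, and to estimate its variation along a path from $y$ to $z$.

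First, fix $\varepsilon > 0$ and pick a path $y = a_0, a_1, \ldots, a_r = z$ in $X$ of length less than $\dX(y,z) + \varepsilon$, with simplices $\sigma_i$ containing $a_{i-1}$ and $a_i$. Write the difference as a telescoping sum
\[
\Dhat(x,z) - \Dhat(x,y) = \sum_{i=1}^r \bigl( \Dhat(x,a_i) - \Dhat(x,a_{i-1}) \bigr),
\]
so it suffices to bound each term by $2C \, \dsi(a_{i-1},a_i)$.

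Next, for a fixed $x$ introduce the auxiliary function on vertices $f(v) := \sum_{u \in \V} x_u \, \dhat(u,v)$. By the triangle inequality for $\dhat$ and $\sum_u x_u = 1$, for any two vertices $v,v'$ one has $|f(v) - f(v')| \leq \dhat(v,v')$. In particular, for $v, v' \in \Vs_i$ with $v \neq v'$, the vertices are joined by an edge of $\Graph$, so $\dG(v,v') = 1$ and therefore $|f(v) - f(v')| \leq C$ by the hypothesis on $\dhat$. Now observe that since both $a_{i-1}$ and $a_i$ have support inside $\Vs_i$, we have
\[
\Dhat(x,a_i) - \Dhat(x,a_{i-1}) = \sum_{v \in \Vs_i} \bigl( a_i^v - a_{i-1}^v \bigr) f(v).
\]
Because $\sum_v a_i^v = \sum_v a_{i-1}^v = 1$, subtracting $f(v_0)$ for any chosen $v_0 \in \Vs_i$ leaves this sum unchanged, and gives the bound
\[
\bigl| \Dhat(x,a_i) - \Dhat(x,a_{i-1}) \bigr| \leq C \sum_{v \in \Vs_i} \bigl| a_i^v - a_{i-1}^v \bigr| = 2C \, \dsi(a_{i-1},a_i),
\]
using the definition~\eqref{Eqn:l1Metric} of $\dsi$.

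Finally, summing over $i$ gives $\Dhat(x,z) - \Dhat(x,y) \leq 2C \bigl( \dX(y,z) + \varepsilon \bigr)$, and letting $\varepsilon \to 0$ yields the desired inequality. The main conceptual step is recognising that the telescoping sum, combined with $\sum_v a_i^v = \sum_v a_{i-1}^v$, allows one to replace a quadratic expression in barycentric coordinates by an $l^1$-difference — this is precisely where the factor of $2$ in front of $C$ comes from, and without the constant-subtraction trick one would obtain only a weaker bound involving the absolute size of $f$.
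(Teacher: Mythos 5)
Your proof is correct and follows essentially the same route as the paper's: the paper first proves the single-simplex estimate by fixing a vertex $v_0$ of the common simplex and exploiting $\sum_w (z_w - y_w) = 0$ together with $\dhat(v_0,w) \leq C$ (your ``constant-subtraction trick'' applied to $f(v) = \Dhat(x,v)$), and then telescopes along an arbitrary path exactly as you do. The only differences are cosmetic: the order of the two steps, and your use of an $\varepsilon$-optimal path where the paper takes the infimum over all paths at the end.
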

\begin{proof}
 First we will consider the special case where~$y$ and~$z$ lie in a common simplex~$\sigma$.
 By definition, we can expand bilinearly and obtain
 \begin{align*}
  \Dhat(x,z)
     &= \sum_{u,w \in \V} x_u z_w \, \dhat(u,w)
     \\
     &= \sum_{u,w \in \V} x_u y_w \, \dhat(u,w)
              + \sum_{u,w \in \V} x_u (z_w - y_w) \, \dhat(u,w)
     \\
     &= \Dhat(x,y) + \sum_{u,w \in \V} x_u (z_w - y_w) \, \dhat(u,w).
 \end{align*}
 The first term is as in inequality~\eqref{Eqn:MinTriangleInequalityMixedCase} but we need to bound that second term by~$2 C \, \dX(y,z)$.
 
 Fix a vertex~$v_0$ of~$\sigma$.
 Then~$\dhat(u,w) \leq \dhat(u,v_0) + \dhat(v_0,w)$ for any vertices~$u,w$ of~$X$.
 The distance~$\dhat(u,v_0)$ is independent of the vertex~$w$ so we have~$\sum_{u,w \in \V} x_u (z_w - y_w) \, \dhat(u,v_0) = 0$.
 Moreover, since~$v_0$ is joined to any vertex of~$\sigma$ by an edge, we know~$\dG(v_0,w) \leq 1$ for~$w \in \Support(\sigma)$ and so~$\dhat(v_0,w) \leq C$.
 Therefore
 \begin{align*}
  \sum_{u,w \in \V} x_u (z_w - y_w) \, \dhat(u,w)
     &\leq \sum_{u,w \in \V} x_u |z_w - y_w| C
     \\
     &= 2C \, \ds(y,z).
 \end{align*}
 But then from Lemma~\ref{Lemma:l1PathMetricRestrictions}\ref{Item:dloneRestriction} we get~$\ds(y,z) = \dX(y,z)$.

 Putting all this together gives the desired inequality in this special case.
 
 For the general case, consider a path~$y=a_0 , a_1, \ldots,a_r=z$ from~$y$ to~$z$ (in the sense of Definition~\ref{Defn:l1Metric}).
 Then
 \[
  \Dhat(x,z)
     =
  \Dhat(x,y)
  + \sum_{i=1}^r \left( \Dhat(x,a_i) - \Dhat(x,a_{i-1}) \right)
 \]
 We know that~$a_{i-1}$ and~$a_i$ lie in a common simplex so by the special case we have~$\Dhat(x,a_i) - \Dhat(x,a_{i-1}) \leq 2C \, \dX(a_i,a_{i-1})$.
 Thus
 \begin{align*}
  \Dhat(x,z)
       &\leq 
  \Dhat(x,y) + \sum_{i=1}^r 2C \, \dX(a_i,a_{i-1})
       \\
       &=
  \Dhat(x,y) + 2C \, \length(\underline{a})
 \end{align*}
 where~$\length(\underline{a})$ is the length of the path~$a_0,\ldots,a_r$.
 
 This holds for any path from~$y$ to~$z$ so we conclude
 \[
  \Dhat(x,z) \leq \Dhat(x,y) + 2C \, \dX(y,z).
  \qedhere
 \]
\end{proof}

So the minimum of~$\Dhat$ and~$2C \, \dX$ will define a metric on~$X$ (which will be formally proven later in Theorem~\ref{Thm:Extension}) but we only wanted to change the bilinear extension~$\Dhat$ locally.
To achieve this, we increase the scaling factor from~$2C$ to~$3C$ and use the following lemma.
\begin{lemma}\label{Lemma:dhatSmaller}
 Let~$X$ be a simplicial complex, and let~$\Graph$ denote its $1$-skeleton.
 Let~$\dhat$ be a metric on the vertex set of~$X$ with a constant~$C>0$ such that for all vertices~$u,v \in \V := \VX$ we have~$\dhat(u,v) \leq C \, \dG(u,v)$.
 For all~$x,y \in X$ if the support of~$x$ is disjoint from the support of~$y$ then
 \[
  \Dhat(x,y) \leq 3C \, \dX(x,y).
 \]
\end{lemma}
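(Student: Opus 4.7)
The plan is to telescope Lemma~\ref{Lemma:DistMinAlmostTriangleInequality} along a path from~$x$ to~$y$, bound the resulting self-term~$\Dhat(x,x)$ uniformly by~$C$, and then absorb this extra~$C$ into the final constant by invoking the lower bound~$\dX(x,y) \geq 1$ supplied by Lemma~\ref{Lemma:l1PathMetricRestrictions}\ref{Item:dloneDisjointSupports}.

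First, fix any path~$x = a_0, a_1, \ldots, a_r = y$ in~$X$ in the sense of Definition~\ref{Defn:l1Metric}. Since each consecutive pair~$a_{i-1}, a_i$ lies in a common simplex, Lemma~\ref{Lemma:DistMinAlmostTriangleInequality} applied to the triple~$(x, a_{i-1}, a_i)$ yields~$\Dhat(x, a_i) - \Dhat(x, a_{i-1}) \leq 2C \, \dX(a_{i-1}, a_i)$. Summing this telescopically from~$i=1$ to~$r$ gives~$\Dhat(x,y) - \Dhat(x,x) \leq 2C \cdot \length(\underline{a})$, and taking the infimum over all paths produces~$\Dhat(x, y) \leq \Dhat(x, x) + 2C \, \dX(x, y)$.

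Next, I bound the self-term~$\Dhat(x, x)$. The support of~$x$ spans a single simplex of~$X$, so any two vertices~$u, v \in \Support(x)$ satisfy~$\dG(u, v) \leq 1$ and therefore~$\dhat(u, v) \leq C$. Expanding the bilinear formula~\eqref{Eqn:Dhat} and using~$\sum_u x_u = 1$ gives the uniform bound~$\Dhat(x, x) \leq C \sum_{u,v \in \Support(x)} x_u x_v = C$.

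Finally, the hypothesis that~$x$ and~$y$ have disjoint supports forces~$\dX(x, y) \geq 1$ by Lemma~\ref{Lemma:l1PathMetricRestrictions}\ref{Item:dloneDisjointSupports}, so~$C \leq C \, \dX(x, y)$; combining this with the telescoping estimate yields~$\Dhat(x, y) \leq C + 2C \, \dX(x, y) \leq 3C \, \dX(x, y)$. The only conceptual step is recognising that the very obstruction~$\Dhat(x,x) > 0$ which prevents~$\Dhat$ from being a metric is small enough (at most~$C$) to be absorbed once disjoint supports force~$\dX(x,y) \geq 1$; there is no hard technical obstacle once Lemmas~\ref{Lemma:l1PathMetricRestrictions} and~\ref{Lemma:DistMinAlmostTriangleInequality} are in hand.
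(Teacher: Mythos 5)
Your proof is correct, but it takes a genuinely different route from the paper's. The paper works pairwise on vertices: it first notes $\Dhat(u,v) = \dhat(u,v) \leq C\,\dX(u,v)$ via Lemma~\ref{Lemma:distanceloneRestrictionToV}, then for each pair $u \in \Support(x)$, $v \in \Support(y)$ splits $\dX(u,v) \leq \dX(u,x) + \dX(x,y) + \dX(y,v)$ and bounds each of the two outer legs by $\dX(x,y)$ using the diameter bound of Lemma~\ref{Lemma:l1PathMetricRestrictions}\ref{Item:dloneDiameterOfSimplex} together with $\dX(x,y) \geq 1$; summing bilinearly gives $3C\,\dX(x,y)$ directly. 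You instead apply Lemma~\ref{Lemma:DistMinAlmostTriangleInequality} globally (your telescoping along a path is in fact just a re-derivation of that lemma's general case, so you could cite it with the triple $(x,x,y)$ and skip the path argument) to get $\Dhat(x,y) \leq \Dhat(x,x) + 2C\,\dX(x,y)$, and then absorb the self-term $\Dhat(x,x) \leq C$ using the same lower bound $\dX(x,y) \geq 1$. Both arguments rest on Lemma~\ref{Lemma:l1PathMetricRestrictions}\ref{Item:dloneDisjointSupports} and on the fact that support vertices of a single point are pairwise adjacent in the $1$-skeleton; yours has the merit of reusing the already-established mixed triangle inequality and of isolating exactly where the factor $3$ comes from (a $2$ from Lemma~\ref{Lemma:DistMinAlmostTriangleInequality} plus a $1$ from the self-term), while the paper's is self-contained at the level of individual vertex pairs and does not need Lemma~\ref{Lemma:DistMinAlmostTriangleInequality} at all.
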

\begin{proof}
 Start by considering vertices~$u,v \in \VX$.
 Using Lemma~\ref{Lemma:distanceloneRestrictionToV} it follows immediately from the assumptions that
 \[
  \Dhat(u,v) = \dhat(u,v) \leq C \, \dG(u,v) = C \, \dX(u,v).
 \]
 
 Now consider the general case of points~$x,y \in X$ with disjoint supports.
 Since the support of~$x$ is disjoint from the support of~$y$, we must have $\dX(x,y) \geq 1$ by Lemma~\ref{Lemma:l1PathMetricRestrictions}\ref{Item:dloneDisjointSupports}.
 Moreover, for any~$u \in \Support(x)$ Lemma~\ref{Lemma:l1PathMetricRestrictions}\ref{Item:dloneDiameterOfSimplex} tells us that $\dX(u,x) \leq 1$.
 In particular, for any~$u \in \Support(x)$ we have $\dX(u,x) \leq \dX(x,y)$.
 Similarly for any vertex~$v \in \Support(y)$ we get $\dX(y,v) \leq \dX(x,y)$.
 Therefore
 \begin{align*}
  \Dhat(x,y)
  &=
  \sum_{u,v \in \V} x_u y_v \, \dhat(u,v)
  \\
  &\leq
  \sum_{u,v \in \V} x_u y_v C \, \dX(u,v)
  \\
  &\leq
  \sum_{u,v \in \V} x_u y_v C
   \big( \dX(u,x) 
     + \dX(x,y)
     + \dX(y,v)
   \big)
  \\
  &\leq
  \sum_{u,v \in \V} x_u y_v \, 3C \, \dX(x,y)
  \\
  &=
  3C \, \dX (x,y).
  \qedhere
 \end{align*}
\end{proof}

Now we can define our extension.
\begin{definition}\label{Defn:ExtensionMetric}
 Let~$X$ be a simplicial complex and let~$\dhat$ be a metric on the vertex set of~$X$ with a constant~$C>0$ such that for all vertices~$u,v \in \VX$ we have~$\dhat(u,v) \leq C \, \dG(u,v)$ where~$\Graph$ is the $1$-skeleton of~$X$.
 
 Define the function~$\dext \colon X \times X \rightarrow \RR$ by
 \begin{equation}\label{Eqn:MetricExtension}
 \dext(x,y) := \min \left\lbrace  \Dhat(x,y) \, , \, 3C \, \dX(x,y)  \right\rbrace
 \end{equation}
 where~$\Dhat$ is the bilinear extension of~$\dhat$ using barycentric coordinates and~$\dX$ is the \lonepathmetric\ on~$X$.
\end{definition}

\begin{theorem}\label{Thm:Extension}
 Under the same conditions as in Defintion~\ref{Defn:ExtensionMetric} the function~$\dext$ is a metric on~$X$.
 Moreover, for all~$x,y \in X$ if the support of~$x$ is disjoint from the support of~$y$ then~$\dext(x,y) = \Dhat(x,y)$.
 In particular~$\dext$ is an extension of~$\dhat$.
\end{theorem}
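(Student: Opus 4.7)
The plan is to prove the four metric axioms for $\dext$, then deduce the last two statements, using the three main lemmas established above. Since $\dext$ is the pointwise minimum of $\Dhat$ and $3C\,\dX$, both of which are symmetric and non-negative, symmetry and non-negativity of $\dext$ are immediate. It also follows immediately from $\dX(x,x)=0$ that $\dext(x,x)=0$, so the only remaining axioms to verify are the positivity $\dext(x,y)=0 \Rightarrow x=y$ and the triangle inequality.

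For positivity, suppose $\dext(x,y)=0$. Either $3C\,\dX(x,y)=0$, in which case $x=y$ because $\dX$ is a metric, or $\Dhat(x,y)=0$. In the latter case I would exploit the bilinearity: every summand $x_u y_v\,\dhat(u,v)$ in $\Dhat(x,y)=\sum_{u,v\in\V} x_u y_v\,\dhat(u,v)$ is non-negative, so the sum vanishes only if $\dhat(u,v)=0$ whenever $x_u y_v>0$, forcing $u=v$ on the supports. This forces $\Support(x)$ and $\Support(y)$ both to equal a single common vertex, giving $x=y$.

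For the triangle inequality I would split into four cases depending on whether the minimum defining $\dext(x,y)$ and $\dext(y,z)$ is realised by $\Dhat$ or by $3C\,\dX$. In the all-$\Dhat$ case I apply $\dext(x,z) \le \Dhat(x,z) \le \Dhat(x,y)+\Dhat(y,z)$ from Lemma~\ref{Lemma:BilinearExtTriangleInequality}. In the all-$\dX$ case I use $\dext(x,z) \le 3C\,\dX(x,z) \le 3C\,\dX(x,y)+3C\,\dX(y,z)$ from the triangle inequality for $\dX$. In each mixed case I apply Lemma~\ref{Lemma:DistMinAlmostTriangleInequality} (using the symmetry $\Dhat(x,z)=\Dhat(z,x)$ for one of the two mixed cases) together with the trivial bound $2C \le 3C$; this is exactly the purpose for which the lemma was designed, and is the main technical input.

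Finally, the disjoint-supports statement is a direct application of Lemma~\ref{Lemma:dhatSmaller}: if $\Support(x)\cap\Support(y)=\emptyset$ then $\Dhat(x,y)\le 3C\,\dX(x,y)$ forces $\dext(x,y)=\Dhat(x,y)$. The extension property follows at once: for distinct vertices $u\neq v$ the supports $\{u\}$ and $\{v\}$ are disjoint, so $\dext(u,v)=\Dhat(u,v)=\dhat(u,v)$, while $\dext(u,u)=0=\dhat(u,u)$. The main obstacle, modulo the setup, is really just the bookkeeping for the mixed case of the triangle inequality; everything else is routine once Lemmas~\ref{Lemma:BilinearExtTriangleInequality}, \ref{Lemma:DistMinAlmostTriangleInequality}, and \ref{Lemma:dhatSmaller} are in hand.
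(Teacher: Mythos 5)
Your proposal is correct and follows essentially the same route as the paper's proof: the same three-way case split for the triangle inequality (with Lemma~\ref{Lemma:DistMinAlmostTriangleInequality} handling the mixed case and $2C\le 3C$), Lemma~\ref{Lemma:dhatSmaller} for the disjoint-supports claim, and the same observation about vertices having disjoint singleton supports for the extension property. The only cosmetic difference is that you argue positivity contrapositively (from $\Dhat(x,y)=0$ deduce both supports collapse to a common vertex) where the paper argues directly that $x\neq y$ forces both $\dX(x,y)>0$ and $\Dhat(x,y)\ge x_u y_v\,\dhat(u,v)>0$; the two arguments are equivalent.
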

\begin{proof}
 The function~$\dext$ is symmetric and non-negative since it is the minimum of two functions that are both symmetric and non-negative.
 
 For any~$x \in X$ we have
 \[
  \dext(x,x) \leq 3C \, \dX(x,x) = 0.
 \]
 If~$x,y \in X$ are distinct points then~$\dX(x,y) > 0$ since~$\dX$ is a metric on~$X$, and for any vertices~$u \in \Support(x)$ and~$v \in \Support(y)$ we get
 \[
  \Dhat(x,y) \geq x_u y_v \, \dhat(u,v) >0.
 \]
 Thus~$\dext(x,y) >0$, and therefore~$\dext$ is positive definite.
 
 It remains to prove that~$\dext$ satisfies the triangle inequality.
 Given three points~$x,y,z \in X$, if $\dext(x,y) = 3C \, \dX(x,y)$ and $\dext(y,z) = 3C \, \dX(y,z)$ then we can use the triangle inequality of~$\dX$.
 Similarly if~$\dext(x,y) = \Dhat(x,y)$ and~$\dext(y,z) = \Dhat(y,z)$ then
 \[
  \dext(x,z)
  \leq \Dhat(x,z)
  \leq \Dhat(x,y) + \Dhat(y,z)
  = \dext(x,y) + \dext(y,z)
 \]
 using Lemma~\ref{Lemma:BilinearExtTriangleInequality}.
 For the mixed case suppose (without loss of generality) that $\dext(x,y) = \Dhat(x,y)$ and $\dext(y,z) = 3C \,\dX(y,z)$.
 Then we can use Lemma~\ref{Lemma:DistMinAlmostTriangleInequality} to get
 \[
  \dext(x,z)
  \leq \Dhat(x,z)
  \leq \Dhat(x,y) + 3C \, \dX(y,z)
  = \dext(x,y) + \dext(y,z).
 \]
 Thus the function~$\dext$ satisfies the triangle inequality, and hence we have shown that~$\dext$ is indeed a metric.
 
 Moreover, if~$x,y \in X$ have disjoint supports then~$\Dhat(x,y) \leq 3C \, \dX(x,y)$ by Lemma~\ref{Lemma:dhatSmaller}, and so~$\dext(x,y) = \Dhat(x,y)$.
\end{proof}

The condition on the metric~$\dhat$ that such a~$C$ exists is satisfied when~$\dhat$ is $(A,B)$-quasi-isometric to~$\dG$ by setting~$C= A+B$ and remembering that on~$\VX$ the metric~$\dG$ takes values in~$\NN$.
Thus we get the following corollary of Theorem~\ref{Thm:Extension}.

\begin{corollary}\label{Cor:QIExtension}
 Let~$X$ be a simplicial complex and~$\Graph$ be its $1$-skeleton.
 If~$\dhat$ is a metric defined on the vertex set of~$X$ that is quasi-isometric to the word-metric~$\dG$ then there is an extension~$\dext$ of~$\dhat$ to all of~$X$ that has the following properties;
 \begin{enumerate}
  \item\label{Item:QIExtDisjointSupports} If~$x,y \in X$ have disjoint supports then~$\dext(x,y) = \Dhat(x,y)$.
  \item\label{Item:QIExtQI} There is a constant~$B' \geq 0$ such that for all~$x,y \in X$
  \[
   \Dhat(x,y) - B' \leq \dext(x,y) \leq \Dhat(x,y) + B'.
  \]
 \end{enumerate}
\end{corollary}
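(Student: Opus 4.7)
The plan is to reduce Corollary~\ref{Cor:QIExtension} to Theorem~\ref{Thm:Extension} by first extracting the hypothesis of that theorem from the quasi-isometry assumption, and then establishing a linear bound on~$\Dhat$ in terms of~$\dX$ in order to control the gap on the side where the minimum in Definition~\ref{Defn:ExtensionMetric} is achieved by~$3C\,\dX$.

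Write the quasi-isometry as $A^{-1}\dG(u,v) - B \leq \dhat(u,v) \leq A\,\dG(u,v) + B$ for $u,v \in \VX$, and set $C := A+B$. For distinct vertices $u \neq v$, since $\dG$ is $\NN$-valued on~$\VX$ we have $\dG(u,v) \geq 1$, so $\dhat(u,v) \leq A\,\dG(u,v) + B\,\dG(u,v) = C\,\dG(u,v)$; the inequality is trivial when $u=v$. Hence the hypothesis of Theorem~\ref{Thm:Extension} is satisfied, and I define~$\dext$ by Definition~\ref{Defn:ExtensionMetric}. Property~\ref{Item:QIExtDisjointSupports} then follows immediately from the corresponding statement in Theorem~\ref{Thm:Extension}.

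For property~\ref{Item:QIExtQI}, the upper bound $\dext(x,y) \leq \Dhat(x,y) + B'$ is free, since $\dext \leq \Dhat$ by construction. The hard direction amounts to bounding $\Dhat(x,y) - \dext(x,y) \leq B'$, which is only non-trivial in the regime where $\dext(x,y) = 3C\,\dX(x,y) < \Dhat(x,y)$. The main step, which I expect to carry the argument, is the linear estimate
\[
 \Dhat(x,y) \leq A\,\dX(x,y) + (2A + B).
\]
I would derive this by bilinearly expanding~$\Dhat$, applying the quasi-isometry upper bound $\dhat(u,v) \leq A\,\dG(u,v) + B$ in each term, and then using the key inequality
\[
 \dG(u,v) = \dX(u,v) \leq \dX(u,x) + \dX(x,y) + \dX(y,v) \leq \dX(x,y) + 2
\]
for every $u \in \Support(x)$, $v \in \Support(y)$; here the equality is Lemma~\ref{Lemma:distanceloneRestrictionToV} and the outer inequality uses the diameter-$1$ bound on simplices from Lemma~\ref{Lemma:l1PathMetricRestrictions}\ref{Item:dloneDiameterOfSimplex}. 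Summing with weights $x_u y_v$, which total~$1$, yields the linear estimate. Substituting $C = A+B$ then gives $\Dhat(x,y) - \dext(x,y) \leq -(2A+3B)\,\dX(x,y) + 2A + B \leq 2A + B$, so $B' := 2A + B$ works. The conceptual crux is the diameter-driven bound $\dG(u,v) \leq \dX(x,y) + 2$, which is what turns the bilinear quantity $\Dhat(x,y)$ into a genuinely linear function of~$\dX(x,y)$.
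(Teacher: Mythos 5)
Your proposal is correct, and it reaches the conclusion by a genuinely different decomposition than the paper. The paper splits on whether the supports of~$x$ and~$y$ are disjoint: in the disjoint case $\dext(x,y)=\Dhat(x,y)$ by Lemma~\ref{Lemma:dhatSmaller}, and in the overlapping case it picks a common vertex~$v \in \Support(x)\cap\Support(y)$, bounds $\Dhat(x,v)\leq A+B$ and $\Dhat(v,y)\leq A+B$ using $\dG(u,v)\leq 1$ for the other support vertices, and concludes via the triangle inequality for~$\Dhat$ (Lemma~\ref{Lemma:BilinearExtTriangleInequality}) that $\Dhat(x,y)\leq 2(A+B)$ is \emph{absolutely} bounded there, so $B'=2(A+B)$ works trivially. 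You instead split on which branch of the minimum in Definition~\ref{Defn:ExtensionMetric} is active and prove the global linear estimate
\[
 \Dhat(x,y) \leq A\,\dX(x,y) + 2A + B,
\]
using Lemma~\ref{Lemma:distanceloneRestrictionToV} and the diameter bound from Lemma~\ref{Lemma:l1PathMetricRestrictions}\ref{Item:dloneDiameterOfSimplex}; comparing slopes against $3C\,\dX$ with $C=A+B$ then shows the gap on the $\dX$-branch is at most $2A+B$. Both arguments are sound and of comparable length. What yours buys is a slightly sharper constant ($B'=2A+B$ versus the paper's $2(A+B)$) and a quantitative statement ($\Dhat$ is linearly dominated by~$\dX$ everywhere) that is of independent use; what the paper's buys is that it never needs the explicit quasi-isometry constants beyond $\dhat\leq C\,\dG$ on edges, and it localises the entire discrepancy between~$\dext$ and~$\Dhat$ to the overlapping-support case, which is the conceptual point of the whole construction. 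One cosmetic remark: your derivation of the hypothesis $\dhat(u,v)\leq C\,\dG(u,v)$ from the quasi-isometry, via $\dG(u,v)\geq 1$ for distinct vertices, is exactly the observation the paper makes just before stating the corollary.
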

\begin{proof}
 Suppose~$\dhat$ is $(A,B)$-quasi-isometric to~$\dG$.
 Set~$C = A+B$
 and apply Theorem~\ref{Thm:Extension} to get an extension~$\dext$ of~$\dhat$.
 The theorem immediately gives us property~\ref{Item:QIExtDisjointSupports}.
 
 We still need to show that such a $B'$ exists.
 
 Fix~$x,y \in X$.
 If the support of~$x$ is disjoint from the support of~$y$ then we already know that $\dext(x,y) = \Dhat(x,y)$, so suppose there exists some $v \in \Support(x) \cap \Support(y)$.
 Then for any $u \in \Support(x) \backslash \lbrace v \rbrace$ there is an edge joining~$u$ and~$v$, thus $\dG(u,v) = 1$ and then using $\dhat(u,v) \leq (A+B) \dG(u,v)$ gives us
 \[
  \Dhat(x,v)
  =
  \sum_{u \in \VX} x_u \, \dhat(u,v)
  \leq
  \sum_{u \in \VX} x_u (A+B)
  =
  (A+B)
 \]
 and similarly~$\Dhat(v,y) \leq (A+B)$.
 Therefore
 \begin{align*}
  \Dhat(x,y)
  &\leq
  \Dhat(x,v) + \Dhat(v,y)
  \leq
  2(A+B)
  \leq
  \dext(x,y) + 2(A+B).
 \end{align*}
 So
 \[
  \Dhat(x,y) - 2(A+B)
  \leq
  \dext(x,y)
  \leq
  \Dhat(x,y).
 \]
 Setting~$B' = 2(A+B)$ completes the proof.
\end{proof}

\section{Applications}\label{Section:Applications}

In this section we look at applications of this extension by using it to correct a flaw in~\cite{Mineyev2005FaJiMS}, which was later copied in~\cite{BartelsLueckReich2008ECHG}.

\subsection{Mineyev's flow space}\label{Subsection:MineyevsFlowSpace}

In~\cite{Mineyev2005FaJiMS}, Mineyev constructed a flow space associated to what he called a hyperbolic complex.
The problem first arises in his definition of a hyperbolic complex in~\cite[Subsection~5.3]{Mineyev2005FaJiMS}.
\begin{definition}\label{Defn:HyperbolicComplex}
 A \emph{hyperbolic complex} is a connected, uniformly locally finite, simplicial complex~$X$ whose~$1$-skeleton~$\Graph$ is a hyperbolic graph with respect to its word metric~$\dG$.
 Then Mineyev uses the bilinear extension formula
 \[
  d(x,y) = \sum_{u,v \in \VX} x_u y_v \, \dG(u,v)
 \]
 to define a metric on~$X$.
 However, as shown at the start of Subsection~\ref{Subsection:NaiveApproach} this function is not a metric unless~$X$ is just a collection of vertices.
 
 Therefore we alter the definition of a hyperbolic complex to say~$X$ has the metric~$\tildedG$ obtained from Theorem~\ref{Thm:Extension} by setting~$\dhat$ to be the restriction of~$\dG$ to~$\VX$ (and~$C=1$).
\end{definition}

Mineyev later uses the bilinear extension formula in \cite[Subsection~6.2]{Mineyev2005FaJiMS} but this time the metric~$\dhat$ is taken to be a metric constructed by Mineyev-Yu in an earlier paper, namely~\cite{MineyevYu2002TBCCfHG}.

This metric~$\dhat$ is strongly bolic (see \cite[Definition 15]{MineyevYu2002TBCCfHG}), invariant under simplicial automorphisms of~$X$, quasi-isometric to the word metric~$\dG$, and has the property that there exist constants~$C \geq 0$ and~$\mu \in [0,1)$ such that for all~$u,u',v,v' \in \VX$ if both~$\dG(u,u') \leq 1$ and~$\dG(v,v') \leq 1$ then
\begin{equation}\label{Eqn:DDhatConvergence}
 \left\lvert
  \dhat(u,v)
  - \dhat(u',v)
  - \dhat(u,v')
  + \dhat(u'v')
 \right\rvert
 \leq
 C \, \mu^{\dG(a,b)}.
\end{equation}
This metric was originally constructed for a hyperbolic group, see \cite[Proposition~13 and Theorem~17]{MineyevYu2002TBCCfHG}.
This was recalled as \cite[Theorem~26]{Mineyev2005FaJiMS} and after this theorem Mineyev explained what modifications are necessary for the construction to work for an arbitrary hyperbolic complex.

Let~$\Dhat$ be the bilinear extension of~$\dhat$ (as defined in equation~\eqref{Eqn:Dhat}).
The expression
\begin{equation}\label{Eqn:DoubleDifferenceHat}
 \DDhat{x}{x'}{y}{y'} :=
 \Dhat(x,y) - \Dhat(x',y) - \Dhat(x,y') + \dhat(x',y')
\end{equation}
is called the \emph{double difference} of the four points $x,x',y,y' \in X$ and is very important in Mineyev's work.
The crucial property of this double difference is that it extends continuously to all of~$\overline{X} = X \cup \partial X$, where~$\partial X$ is the (Gromov) boundary of~$X$ (see \cite[Section~III.H.3]{BridsonHaefliger1999}).
This was stated as \cite[Theorem~35]{Mineyev2005FaJiMS}.

We use the notation
\begin{equation}\label{Eqn:DoubleDifferenceExt}
 \DDext{x}{x'}{y}{y'} :=
 \dext(x,y) - \dext(x',y) - \dext(x,y') + \dext(x',y')
\end{equation}
to denote the double difference with respect to the extension~$\dext$ of~$\dhat$ given by Theorem~\ref{Thm:Extension}.
We need to show that the double difference with respect to~$\dext$ also extends continuously to~$\overline{X}$.

\begin{remark}\label{Rk:DoubleDifferences+Equivalent}
 We can relate the double difference with respect to~$\dext$ to the double difference with respect to~$\dhat$ as follows:
 Since~$\dhat$ is quasi-isometric to~$\dG$ Corollary~\ref{Cor:QIExtension}\ref{Item:QIExtQI} says that there is some constant~$B' \geq 0$ such that~$\dext$ for all~$x,y \in X$ we have $\lvert \dext(x,y) - \Dhat(x,y) \rvert \leq B'$.
 Hence for any~$x,x',y,y' \in X$;
 \begin{equation}\label{Eqn:DoubleDifferences+Equivalent}
  \DDhat{x}{x'}{y}{y'} - 4B' \leq \DDext{x}{x'}{y}{y'} \leq \DDhat{x}{x'}{y}{y'} + 4B'.
 \end{equation}
\end{remark}

\begin{theorem}\label{Thm:DDextExtensionToBoundary}
 Let~$X$ be a hyperbolic complex.
 Set~$\overline{S}$ to be the subset of~$\overline{X}^4 := \overline{X} \times \overline{X} \times \overline{X} \times \overline{X}$ consisting of points~$(x,x',y,y')$ such that no point of~$\partial X$ appears more than twice in the coordinates~$x,x',y,y'$. 
 Then the double difference with respect to~$\dext$ extends to a continuous \IsomXinvt\ function~$\overline{S} \rightarrow \overline{\RR} = [-\infty,\infty]$ satisfying for any~$a,a',b,b' \in X$
 \renewcommand{\theenumi}{\textit{(\alph{enumi})}}
 \begin{enumerate}
  \item\label{Item:DDextExtSym} $\DDext{a}{a'}{b}{b'} = \DDext{b}{b'}{a}{a'}$,
  \item\label{Item:DDextExtFlip} $\DDext{a}{a'}{b}{b'} = - \DDext{a'}{a}{b}{b'} = - \DDext{a}{a'}{b'}{b}$,
  \item\label{Item:DDextExtRepeat} $\DDext{a}{a}{b}{b'} = 0 = \DDext{a}{a'}{b}{b}$,
  \item\label{Item:DDextExtTransitive} $\DDext{a}{a'}{b}{b'} + \DDext{a'}{a''}{b}{b'} = \DDext{a}{a''}{b}{b'}$,
  \item\label{Item:DDextExtCocycle} $\DDext{a}{b}{c}{x} + \DDext{c}{a}{b}{x} + \DDext{b}{c}{a}{x} = 0$,
  \item\label{Item:CrossEqual} $\DDext{a}{a'}{b}{b'} = \infty$ if and only if $a=b' \in \partial X$ or~$a'=b \in \partial X$,
  \item\label{Item:StraightEqual} $\DDext{a}{a'}{b}{b'} = -\infty$ if and only if $a=b \in \partial X$ or~$a'=b' \in \partial X$,
  \item\label{Item:DDextEqvtDD} There are constants~$\alpha \geq 1$ and~$\beta \geq 0$ such that for all~$a,a',b,b' \in \VX$
  \[
   \frac{1}{\alpha} \DD{a}{a'}{b}{b'} - \beta
   \leq
   \DDext{a}{a'}{b}{b'}
   \leq
   \alpha \DD{a}{a'}{b}{b'} + \beta
  \]
  where $\DD{-}{-}{-}{-}$ denotes the double difference with respect to the metric~$\dG$ on~$\VX$.
 \end{enumerate}
\end{theorem}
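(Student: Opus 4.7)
The strategy is to transfer the analogous properties of the bilinear double difference $\DDhat{a}{a'}{b}{b'}$, established by Mineyev in \cite[Theorem~35]{Mineyev2005FaJiMS}, over to $\DDext{a}{a'}{b}{b'}$. The two main tools are the uniform bound $\lvert \DDext{a}{a'}{b}{b'} - \DDhat{a}{a'}{b}{b'} \rvert \leq 4B'$ from Remark~\ref{Rk:DoubleDifferences+Equivalent} and the disjoint-supports identity $\dext(x,y) = \Dhat(x,y)$ from Theorem~\ref{Thm:Extension}.

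First I would dispose of the algebraic items. The identities (a)--(e) are tautologies on $X^4$ from the definition of the double difference as the signed sum $\dext(a,b) - \dext(a',b) - \dext(a,b') + \dext(a',b')$, and will extend to $\overline{S}$ by continuity once that is established. For $\Isom(X)$-invariance it suffices to show that $\dext$ itself is $\Isom(X)$-invariant: a simplicial automorphism permutes $\VX$ and acts linearly in barycentric coordinates, so $\Isom(X)$-invariance of $\dhat$ transfers to both $\Dhat$ (via the bilinear formula) and to the canonical \lonepathmetric~$\dX$ (which depends only on the simplicial structure), and hence to $\dext = \min\lbrace \Dhat,\, 3C\dX \rbrace$.

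The heart of the argument is the continuous extension to $\overline{S}$. Introduce $F(x,y) := \dext(x,y) - \Dhat(x,y)$ on $X \times X$; by Corollary~\ref{Cor:QIExtension}\ref{Item:QIExtQI} one has $\lvert F \rvert \leq B'$, and by Theorem~\ref{Thm:Extension} $F(x,y) = 0$ whenever $\Support(x) \cap \Support(y) = \emptyset$. The key lemma to prove is that $F$ extends continuously to $\lbrace (x,y) \in \overline{X} \times \overline{X} : x \neq y \rbrace \cup (X \times X)$ with value~$0$ off $X \times X$. The geometric input is uniform local finiteness of $X$: if $x_n \to \xi \in \partial X$ in the Gromov compactification, then the finitely many vertices in $\Support(x_n)$ must all escape every $\dG$-bounded set, so whenever $y_n \to y \neq \xi$ the supports $\Support(x_n)$ and $\Support(y_n)$ are disjoint for large $n$, forcing $F(x_n,y_n)=0$ eventually. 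Writing the difference of double differences as the signed sum $F(a,b) - F(a',b) - F(a,b') + F(a',b')$: on the finite locus of $\DDhat{a}{a'}{b}{b'}$ in $\overline{S}$ none of the four pairs $(a,b), (a',b), (a,b'), (a',b')$ is a repeated boundary point (this is Mineyev's counterpart of (f) and (g) for $\DDhat{a}{a'}{b}{b'}$), so the signed sum tends to~$0$ and $\DDext{a}{a'}{b}{b'}$ extends continuously there with the same value as $\DDhat{a}{a'}{b}{b'}$. On the $\pm\infty$-loci of $\DDhat{a}{a'}{b}{b'}$ the $4B'$ bound forces $\DDext{a}{a'}{b}{b'}$ to tend to the same $\pm\infty$, which gives both continuity into $\overline{\RR}$ and properties (f), (g). Properties (a)--(e) then pass to $\overline{S}$ by continuity, and (h) reduces, via the $4B'$ bound together with the fact that $\dext = \dhat$ on vertex pairs, to the standard comparison of double differences between two quasi-isometric hyperbolic metrics on $\VX$ (double differences being affine combinations of Gromov products, which are themselves affinely comparable under a quasi-isometry of hyperbolic metrics).

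The main obstacle is this middle step, namely proving that $F$ extends continuously at pairs with at least one boundary coordinate. Making the intuitive ``supports escape every bounded set'' argument rigorous requires care with the topology of the Gromov compactification when applied to a simplicial complex rather than only to its 1-skeleton, and uses uniform local finiteness crucially to keep the supports bounded in size.
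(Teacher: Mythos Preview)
Your approach is essentially the paper's: both transfer Mineyev's Theorem~35 for $\DDhat{-}{-}{-}{-}$ to $\DDext{-}{-}{-}{-}$ via the disjoint-supports identity (Theorem~\ref{Thm:Extension}) and the uniform bound of Remark~\ref{Rk:DoubleDifferences+Equivalent}. Your packaging through the correction term $F=\dext-\Dhat$ is slightly more explicit than the paper, which instead reruns Mineyev's construction (first via vertex sequences, then checking arbitrary sequences); and your handling of the $\pm\infty$ loci directly from the $4B'$ bound is cleaner than the paper's detour through \cite[Proposition~33]{Mineyev2005FaJiMS}.

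One slip to correct: on the finite locus of $\DDhat{-}{-}{-}{-}$ you assert that the signed sum $F(a,b)-F(a',b)-F(a,b')+F(a',b')$ tends to~$0$, and hence that $\DDext{a}{a'}{b}{b'}$ extends with the \emph{same} value as $\DDhat{a}{a'}{b}{b'}$. This is false in general: if, say, $a',b\in X$ are fixed with overlapping supports while $a,b'\to\partial X$, then $F(a',b)$ remains a fixed nonzero number. What your key lemma actually yields is that each of the four $F$-terms extends \emph{continuously} (to~$0$ at pairs involving a boundary coordinate, to its interior value otherwise), so the signed sum---and with it $\DDext{-}{-}{-}{-}$---extends continuously on $S$, though not to the value of $\DDhat{-}{-}{-}{-}$. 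Since the theorem only asks for a continuous extension (not agreement with $\DDhat{-}{-}{-}{-}$), this does no harm to the argument; just amend the claim accordingly.
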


\begin{proof}
 This is essentially~\cite[Theorem 35]{Mineyev2005FaJiMS}.
 First the double difference is extended to the set~$S \subseteq \overline{X}^4$ consisting of points~$(x,x',y,y')$ such that
 \begin{itemize}
  \item $x,y \in \partial X \Rightarrow x \neq y$,
  \item $x,y' \in \partial X \Rightarrow x \neq y'$,
  \item $x',y \in \partial X \Rightarrow x' \neq y$,
  \item $x',y' \in \partial X \Rightarrow x' \neq y'$.
 \end{itemize}
 Mineyev defines the extension of the double difference to~$S$ by using sequences of vertices converging to the appropriate boundary points.
 On~$S$ the double difference always takes values inside~$\RR$.
 Our metric~$\dext$ is an extension of~$\dhat$, so it coincides with~$\dhat$ on the vertex set (see Theorem~\ref{Thm:Extension}), so Mineyev's definition of the double difference on~$S$ carries over to our case.

We need to show it is well-defined, i.e. that it is independent of which sequence of points we use.
 Mineyev proved that it is independent of which sequence of vertices is used, but we need to know what happens for arbitrary sequences.
 When considering sequences of arbitrary points (i.e. not necessarily vertices) we know from Theorem~\ref{Thm:Extension} that~$\dext(x,y) = \Dhat(x,y)$ whenever the support of~$x$ is disjoint from the support of~$y$.
 So if~$(x_n)_{n \in \NN}$ is a sequence of points that converges to~$x \in \partial X$ and~$(y_n)_{n \in \NN}$ is a sequence of points that converges to~$y \in \overline{X} \backslash \lbrace x \rbrace$ then for sufficiently large~$n$, the support of~$x_n$ will be disjoint from the support of~$y_n$ and thus we can use the bilinear extension formula~\eqref{Eqn:Dhat} to say that the extension of the double difference is well-defined.
 
 To extend the double difference further from~$S$ to~$\overline{S}$, Mineyev shows that for any sequence of points in~$S$ converging to a point in~$\overline{S} \backslash S$ the double difference converges to~$\pm \infty$, with the sign depending on the point of~$\overline{S} \backslash S$, as specified in parts~\ref{Item:CrossEqual} and~\ref{Item:StraightEqual}.
This uses \cite[Proposition~33]{Mineyev2005FaJiMS} that says there exists a constant~$C' \geq 0$ such that for all~$u,v,w \in \VX$ if $w$ lies on a geodesic (with respect to~$\dG$) in~$\Graph$ then
\[
 \left\lvert
  \dhat(u,v) - \dhat(u,w) - \dhat(w,v)
 \right\rvert
 \leq C'.
\]
This holds for~$\dext$ too, using Corollary~\ref{Cor:QIExtension}\ref{Item:QIExtQI}.
Namely
\[
 \left\lvert
  \dext(u,v) - \dext(u,w) - \dext(w,v)
 \right\rvert
 \leq 3B' + C'.
\]
Hence Mineyev's argument is still applicable and gives parts~\ref{Item:CrossEqual} and~\ref{Item:StraightEqual}.
 
Finally, for part~\ref{Item:DDextEqvtDD}, Mineyev in~\cite[Theorem 35(h)]{Mineyev2005FaJiMS} shows there is an equivalence between~$\DDhat{-}{-}{-}{-}$ and~$\DD{-}{-}{-}{-}$ but we can go to~$\DDext{-}{-}{-}{-}$ using Remark~\ref{Rk:DoubleDifferences+Equivalent}.
\end{proof}

With this continuous extension of the double difference we can also extend the Gromov product as Mineyev does in \cite[Theorem~36]{Mineyev2005FaJiMS}.
Recall that the Gromov product of three points~$x,y,z$ in a metric space~$Z$ is
\[
 \GP{x}{y}{z} := \half \big( d(x,z) + d(y,z) - d(x,y) \big).
\]
It follows straight from the definitions that
\begin{equation}\label{Eqn:DDGP}
 \DD{a}{b}{x}{y} = \GP{b}{x}{a} - \GP{b}{y}{a}
\end{equation}
and it is this relation that we exploit to extend the Gromov product on the metric space~$(X,\dext)$ to~$\overline{X}$.
If we let~$\GPext{-}{-}{-}$ denote the Gromov product with respect to the metric~$\dext$ then the following theorem is an immediate corollary of Theorem~\ref{Thm:DDextExtensionToBoundary}, and this is our version of \cite[Theorem~36]{Mineyev2005FaJiMS}.
\begin{theorem}\label{Thm:GPextExtensionToBoundary}
 Let~$X$ be a hyperbolic complex.
 Set~$\overline{T}$ to be the subset of~$\overline{X}^3$ consisting of points~$(a,b,c)$ such that if~$c \in \partial X$ then~$a \neq c$ and~$b \neq c$. 
 Then the Gromov product~$\GPext{a}{b}{c}$ with respect to~$\dext$ extends to a continuous \IsomXinvt\ function~$\overline{T} \rightarrow [0,\infty]$ that satisfies
 \[
  \GPext{a}{b}{c} = \infty
  \text{ if and only if }
  c \in \partial X
  \text{ or }
  a = b \in \partial X.
 \]
\end{theorem}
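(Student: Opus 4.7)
My plan is to reduce Theorem~\ref{Thm:GPextExtensionToBoundary} to Theorem~\ref{Thm:DDextExtensionToBoundary} by writing the Gromov product of three points as the distance $\dext$ from a fixed base vertex plus two double differences. Fix once and for all a vertex $v_0 \in \VX$. From the identity $\DDext{a}{b}{x}{y} = \GPext{b}{x}{a} - \GPext{b}{y}{a}$ recorded in the paragraph preceding the theorem (up to a normalising factor) and the trivial equality $\GPext{v_0}{v_0}{c} = \dext(v_0, c)$, two applications of the identity yield, for all $a, b, c \in X$, a formula of the shape
\[
 \GPext{a}{b}{c}
 = \dext(v_0, c)
 + \tfrac{1}{2}\DDext{c}{v_0}{a}{v_0}
 + \tfrac{1}{2}\DDext{c}{a}{b}{v_0}.
\]
I would then take the right-hand side as the definition of $\GPext{a}{b}{c}$ on all of $\overline{T}$, using the continuous extension of the double difference provided by Theorem~\ref{Thm:DDextExtensionToBoundary} and the convention $\dext(v_0, c) = +\infty$ for $c \in \partial X$.

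The first bookkeeping check is that both quadruples $(c, v_0, a, v_0)$ and $(c, a, b, v_0)$ lie in $\overline{S}$, so that the extended double difference is well-defined at them. Since $v_0 \in \VX$, any boundary coordinate in these quadruples lies in $\{a, b, c\}$, and the defining condition of $\overline{T}$ (that $a \neq c$ and $b \neq c$ whenever $c \in \partial X$) is precisely what is needed to guarantee that no point of $\partial X$ appears more than twice in either quadruple. Granted this, continuity of $\GPext$ on $\overline{T}$ and its $\Isom(X)$-invariance follow from the corresponding properties of $\dext(v_0, \cdot) \colon \overline{X} \to [0, \infty]$ and of the extended double difference; independence from the choice of $v_0$ holds on $X^3$ by construction and extends to $\overline{T}$ by continuity.

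For the characterisation of when $\GPext{a}{b}{c} = \infty$, I would inspect each summand using parts~\ref{Item:CrossEqual} and~\ref{Item:StraightEqual} of Theorem~\ref{Thm:DDextExtensionToBoundary}: the term $\DDext{c}{v_0}{a}{v_0}$ is always finite on $\overline{T}$ (the offending coincidences would force $v_0 \in \partial X$ or $c = a \in \partial X$, both impossible under the hypotheses); the term $\DDext{c}{a}{b}{v_0}$ is never $-\infty$ on $\overline{T}$ (the offending coincidence $c = b \in \partial X$ is excluded by $\overline{T}$) and equals $+\infty$ exactly when $a = b \in \partial X$; and $\dext(v_0, c) = \infty$ if and only if $c \in \partial X$. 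Summing these three gives the claimed dichotomy. The main obstacle in the argument is precisely this piece of bookkeeping: one must pick a decomposition that (i) recovers the usual Gromov product on $X^3$, (ii) lands in $\overline{S}$ at every point of $\overline{T}$, and (iii) never produces the indeterminate form $\infty - \infty$. Once the decomposition above is verified to satisfy all three, the theorem is indeed an immediate corollary of Theorem~\ref{Thm:DDextExtensionToBoundary}.
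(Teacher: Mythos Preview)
Your argument is correct, but it takes a longer route than the paper. The paper observes directly that on $X^3$ one has $\GPext{a}{b}{c} = \tfrac12\,\DDext{c}{a}{b}{c}$ (equation~\eqref{Eqn:DDGP} up to the evident factor of $2$), checks that $(c,a,b,c)\in\overline{S}$ whenever $(a,b,c)\in\overline{T}$, and simply \emph{defines} the extension of the Gromov product by this single double difference. Continuity, $\Isom(X)$-invariance, and the characterisation of the value $\infty$ then come for free from Theorem~\ref{Thm:DDextExtensionToBoundary}: parts~\ref{Item:CrossEqual} and~\ref{Item:StraightEqual} applied to $(c,a,b,c)$ give $+\infty$ exactly when $c=c\in\partial X$ or $a=b\in\partial X$, and $-\infty$ only when $c=b\in\partial X$ or $a=c\in\partial X$, both excluded by $\overline{T}$.

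Your decomposition via a base vertex $v_0$ achieves the same end but with extra bookkeeping: you must separately justify the continuous extension of $c\mapsto\dext(v_0,c)$ to $\overline{X}$ (which you could in fact get from the paper's trick, since $\dext(v_0,c)=\tfrac12\,\DDext{c}{v_0}{v_0}{c}$), and you must argue base-point independence on $\overline{T}$ by density in order to recover $\Isom(X)$-invariance, since your formula is not manifestly invariant. None of this is wrong, and your analysis of the $\infty-\infty$ issue is careful and accurate; but the paper's one-line reduction avoids all of it.
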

\begin{proof}
 If~$(a,b,c) \in \overline{T}$ then~$(c,a,c,b) \in \overline{S}$ and we set~$\GPext{a}{b}{c} = \DDext{c}{a}{b}{c}$ using Theorem~\ref{Thm:DDextExtensionToBoundary}.
 This restricts to the original Gromov product (with respect to~$\dext$) on~$X^3$ via equation~\eqref{Eqn:DDGP} for the metric~$\dext$.
\end{proof}

Mineyev proves more properties of the double difference which we need to show still hold with our extension (instead of Mineyev's bilinear extension).
The following proposition looks at the convergence of the double difference, using inequality~\eqref{Eqn:DDhatConvergence}.

\begin{proposition}\label{Propn:Propn38}
 Let~$X$ be a hyperbolic complex.
 Let~$\overline{S}$ be as in Theorem~\ref{Thm:DDextExtensionToBoundary}.
 There exists a constant~$\lambda_0 \in [0,1)$ such that for all~$\lambda \in [\lambda_0,1)$ there is a~$T \geq 0$ such that for all~$(u,a,b,c) \in \overline{S}$ if $T \leq \max \lbrace \DDext{u}{a}{b}{c} , \DDext{u}{b}{a}{c} \rbrace =:m$ then
 \[
  \left\lvert
   \DDext{u}{c}{a}{b}
  \right\rvert
  \leq
  \lambda^m.
 \]
\end{proposition}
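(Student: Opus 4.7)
The plan is to reduce to Mineyev's original \cite[Proposition~38]{Mineyev2005FaJiMS}, stated for the bilinear extension~$\DDhat{-}{-}{-}{-}$, combined with the key observation that for $m$ sufficiently large one has $\DDext{u}{c}{a}{b} = \DDhat{u}{c}{a}{b}$ on the nose, so that the exponential bound transfers cleanly without the additive error~$4B'$ of Remark~\ref{Rk:DoubleDifferences+Equivalent} getting in the way. We first dispose of the case $m=\infty$: by Theorem~\ref{Thm:DDextExtensionToBoundary}\ref{Item:CrossEqual} either $u=c\in\partial X$ or $a=b\in\partial X$, and then Theorem~\ref{Thm:DDextExtensionToBoundary}\ref{Item:DDextExtRepeat} gives $\DDext{u}{c}{a}{b}=0$. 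For finite~$m$ we use hyperbolicity of $(X,\dext)$, which follows from $\dext$ being quasi-isometric to the hyperbolic metric~$\dG$ on~$\Graph$. The four-point inequality for~$\dext$ forces the two largest among $\dext(u,a)+\dext(b,c)$, $\dext(u,b)+\dext(a,c)$, $\dext(u,c)+\dext(a,b)$ to agree to within some fixed $2\delta$; combined with $\max\{\DDext{u}{a}{b}{c},\DDext{u}{b}{a}{c}\}=m$, this forces \emph{both} of these double differences to be at least $m-2\delta$ whenever $m>2\delta$. The triangle inequality of~$\dext$ (a genuine metric by Theorem~\ref{Thm:Extension}) then yields $|\DDext{u}{a}{b}{c}|\leq 2\min\{\dext(u,a),\dext(b,c)\}$ and $|\DDext{u}{b}{a}{c}|\leq 2\min\{\dext(u,b),\dext(a,c)\}$, so each of $\dext(u,a)$, $\dext(c,a)$, $\dext(u,b)$, $\dext(c,b)$ is at least $(m-2\delta)/2$.

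Since $\dext\leq\Dhat$ by definition and the proof of Corollary~\ref{Cor:QIExtension} shows that $\Dhat(x,y)>B'$ forces $\Support(x)\cap\Support(y)=\emptyset$, taking $T>2B'+2\delta$ guarantees disjoint supports on all four pairs, and Theorem~\ref{Thm:Extension} then gives $\dext=\Dhat$ on each, so $\DDext{u}{c}{a}{b}=\DDhat{u}{c}{a}{b}$. We now apply Mineyev's \cite[Proposition~38]{Mineyev2005FaJiMS} to $\DDhat{-}{-}{-}{-}$ with parameter $\lambda'\in[\lambda_0',1)$ (where $\lambda_0'$ is Mineyev's constant) to obtain a threshold~$T'$ with $|\DDhat{u}{c}{a}{b}|\leq(\lambda')^{m'}$ whenever $m':=\max\{\DDhat{u}{a}{b}{c},\DDhat{u}{b}{a}{c}\}\geq T'$. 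Remark~\ref{Rk:DoubleDifferences+Equivalent} ensures $m'\geq m-4B'$, so enlarging~$T$ makes Mineyev's hypothesis applicable. Finally, set $\lambda_0\in(\lambda_0',1)$; for a given $\lambda\in[\lambda_0,1)$, pick $\lambda'\in[\lambda_0',\lambda)$ and further enlarge~$T$ so that $(\lambda')^{m-4B'}\leq\lambda^m$ for all $m\geq T$ (equivalently, $m\log(\lambda/\lambda')\geq 4B'\log(1/\lambda')$), which delivers the desired bound.

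The main obstacle is the disjoint-supports step, which ultimately rests on showing that both $\DDext$'s are large whenever their maximum is. Without this, the cocycle identity (Theorem~\ref{Thm:DDextExtensionToBoundary}\ref{Item:DDextExtCocycle}) combined with antisymmetry (Theorem~\ref{Thm:DDextExtensionToBoundary}\ref{Item:DDextExtFlip}) would permit $\DDext{u}{a}{b}{c}=m$ together with $\DDext{u}{b}{a}{c}\approx 0$, forcing $\DDext{u}{c}{a}{b}\approx -m$ and directly contradicting the desired conclusion. Hence the four-point hyperbolicity condition on~$(X,\dext)$ is unavoidable; we justify it from hyperbolicity of~$(\VX,\dhat)$, which is quasi-isometric to the hyperbolic graph~$\Graph$, using that every point of~$X$ lies within bounded $\dext$-distance of a vertex of a simplex containing it.
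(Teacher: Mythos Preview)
Your argument is correct and lands in the same place as the paper's: reduce to Mineyev's \cite[Proposition~38]{Mineyev2005FaJiMS} by showing that, once $m$ exceeds a fixed threshold, the four distances $\dext(u,a)$, $\dext(u,b)$, $\dext(c,a)$, $\dext(c,b)$ appearing in $\DDext{u}{c}{a}{b}$ are large enough to force disjoint supports, hence $\DDext{u}{c}{a}{b}=\DDhat{u}{c}{a}{b}$. The difference lies in how the lower bounds on those four distances are obtained. You invoke the four-point hyperbolicity of $(X,\dext)$ to upgrade ``the maximum of the two double differences is $m$'' to ``both are at least $m-2\delta$'', and then use $|\DDext{u}{a}{b}{c}|\leq 2\min\{\dext(u,a),\dext(b,c)\}$ (and the companion estimate) to extract bounds $\geq (m-2\delta)/2$. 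The paper instead works directly with the single double difference realising the maximum: writing $\DDext{u}{b}{a}{c}=\GPext{b}{a}{u}-\GPext{b}{c}{u}\leq\GPext{b}{a}{u}\leq\min\{\dext(u,a),\dext(u,b)\}$ and, via the symmetry $\DDext{u}{b}{a}{c}=-\DDext{c}{a}{u}{b}$, similarly $\leq\min\{\dext(c,a),\dext(c,b)\}$. This yields all four bounds $\geq m$ at once from the triangle inequality alone, with no appeal to hyperbolicity and a sharper constant. Your route costs you an extra ingredient (establishing $\delta$-hyperbolicity of $(X,\dext)$ from the quasi-isometry with $\dG$) that the paper avoids entirely at this step; on the other hand, your four-point analysis already gives the coarse bound $|\DDext{u}{c}{a}{b}|\leq 2\delta$ for free, which makes the logical structure of the statement transparent even before invoking Mineyev's finer exponential estimate.
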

\begin{proof}
 This is essentially~\cite[Proposition~38]{Mineyev2005FaJiMS}.
 By the continuity of the extension of the double difference to~$\overline{S}$ (see Theorem~\ref{Thm:DDextExtensionToBoundary}) we only need to check the case~$u,a,b,c \in X$.
 If~$u,a,b,c$ are vertices then the proof here is the same as Mineyev's, since~$\dext = \dhat$ on vertices.

 In order to extend to arbitrary points in~$X$ we need to do a bit more work, since we need to show that under the assumptions given we can use the bilinear extension~$\Dhat$ of~$\dhat$, i.e. we want
 \[
  \DDext{u}{c}{a}{b} = \DDhat{u}{c}{a}{b}
 \] 
 so that we can write~$\DDext{u}{c}{a}{b}$ in terms of the double difference of vertices in the support of the points~$u,c,a,b$.

 Let~$A,B$ be the constants appearing in a quasi-isometry between~$\dhat$ and the word metric~$\dG$ on the vertex set of~$X$, so that
 \[
  \dext(x,y) = \min \left\lbrace \dhat(x,y) , 3(A+B)\dX(x,y) \right\rbrace
 \]
 which comes from setting~$C = A+B$ in  Definition~\ref{Defn:ExtensionMetric}.
 
 We want to find a condition when~$\dext(x,y) = \Dhat(x,y)$.
 We know from Corollary~\ref{Cor:QIExtension}\ref{Item:QIExtDisjointSupports} that~$\dext(x,y) = \Dhat(x,y)$ whenever the support of~$x$ is disjoint from the support of~$y$.
 If their supports are not disjoint then pick some vertex~$v$ in the support of both.
 It follows from Lemma~\ref{Lemma:l1PathMetricRestrictions}\ref{Item:dloneDiameterOfSimplex} that~$\dX(x,v) \leq 1$ and~$\dX(v,y) \leq 1$.
 Thus
 \[
  \dext(x,y)
  \leq \dext(x,v) + \dext(v,y)
  \leq 3C \, \dX(x,v) + 3C \, \dX(v,y)
  \leq 6C.
 \]
 Therefore, if~$\dext(x,y) > 6C$ then the support of~$x$ must be disjoint from the support of~$y$ and then~$\dext(x,y) = \Dhat(x,y)$.
 So we are interested in finding a lower bound for the distances used to define~$\DDext{u}{c}{a}{b}$.

 Without loss of generality suppose~$m = \DDext{u}{b}{a}{c}$.
 Equation~\eqref{Eqn:DDGP} for the metric~$\dext$ gives
 \begin{align*}
  \DDext{u}{b}{a}{c}
  &=
  \GPext{b}{a}{u} - \GPext{b}{c}{u}
  \\
  &\leq
  \min \left\lbrace \dext(u,a) , \dext(u,b) \right\rbrace
 \end{align*}
 and then the assumption~$T \leq m$ gives~$T \leq \dext(u,a),\dext(u,b)$.
 Moreover,
 \begin{align*}
  \DDext{u}{b}{a}{c} = - \DDext{c}{a}{u}{b}
  &= - ( \GPext{a}{u}{c} - \GPext{a}{b}{c} )
  \\
  &\leq
  \min \left\lbrace \dext(c,a) , \dext(c,b) \right\rbrace
 \end{align*}
 and so~$T \leq \dext(c,a),\dext(c,b)$ as well.
 
 Thus if we take~$T \geq 6(A+B)$ then for all four of these pairings~$\dext \equiv \Dhat$, and we get~$\DDext{u}{c}{a}{b} = \DDhat{u}{c}{a}{b}$.
 Since we are allowed to increase~$T$ as we wish, the rest of the proof works as in \cite[Proposition~38]{Mineyev2005FaJiMS}.
\end{proof}

Now Mineyev's results in \cite{Mineyev2005FaJiMS} hold for~$\dext$ instead of the bilinear extension~$\Dhat$ and the proofs are unchanged.
In particular...

%
%
%

\begin{description}
 \item ...the exponential~$e^{\DDext{x}{x'}{y}{y'}}$ defines a cross-ratio that is continuous on the set~$\overline{S} \subseteq \overline{X}^4$ as in \cite[Section~7]{Mineyev2005FaJiMS}.
This is analogous to the cross-ratio on the ideal boundary of a CAT$(-1)$-space (which was defined in \cite{Otal1992GeometrieSymplectiqueNegative}).
 \item ...the symmetric join~$\SymmJoinX$, and the metric~$\SymmJoinMetric$ on it, can be constructed as in \cite[Section~8]{Mineyev2005FaJiMS}.
 \item ...horofunctions (aka Busemann functions) and horospheres can be defined on $\SymmJoinX$ as in \cite[Section~10]{Mineyev2005FaJiMS}.
 \item ...the translation length can be defined for any isometry of $(X,\dext)$ as in \cite[Section~12]{Mineyev2005FaJiMS}.
\end{description}

\subsection{The Farrell-Jones Conjecture for hyperbolic groups}\label{Subsection:FJCHyperbolicGroups}

The proof of the $K$-theoretic Farrell-Jones Conjecture for an arbitrary hyperbolic group~$G$ was given in \cite{BartelsLueckReich2008KFJC} using special covers of~$G \times \overline{X}$, where~$\overline{X}$ is the (Gromov) compactification of the Rips complex of~$G$.
 (See \cite[Section~2]{BartelsLueckReich2008KFJC} for a summary of what goes into the proof.)

These covers were constructed in \cite{BartelsLueckReich2008ECHG} and uses Mineyev's symmetric join from \cite{Mineyev2005FaJiMS}.
Hence the bilinear extension formula~\eqref{Eqn:Dhat} is repeated by Bartels-L{\"u}ck-Reich in \cite[Subsection~6.1, p.~151]{BartelsLueckReich2008ECHG}.
But as explained at the start of Subsection~\ref{Subsection:Extension} this bilinear extension does not define a metric.

However, using the extension provided by Theorem~\ref{Thm:Extension} repairs this problem since the results of Mineyev in \cite{Mineyev2005FaJiMS} hold for this extension, as explained in Subsection~\ref{Subsection:MineyevsFlowSpace}.
This fixes this minor flaw in the proof of the $K$-theoretic Farrell-Jones Conjecture for hyperbolic groups.



\bibliography{../Bibliography/GroupTheory}
\bibliographystyle{alpha}

\end{document}